\newtheorem{theorem}{Theorem}[section]
\newtheorem{corollary}[theorem]{Corollary}
\numberwithin{equation}{section}
\newcommand{\beq}{\begin{eqnarray}}
\newcommand{\eeq}{\end{eqnarray}}
\newcommand{\beqs}{\begin{eqnarray*}}
\newcommand{\eeqs}{\end{eqnarray*}}
\title{\bf  Maximum Zagreb Indices Among All $p-$Quasi $k-$Cyclic Graphs}
\author{ \bf Ali Ghalavand\thanks{Corresponding author (alighalavand@grad.kashanu.ac.ir)}~ and  Ali Reza Ashrafi
}
\affil{ \normalsize
    { \it Department of Pure Mathematics, Faculty of Mathematical Sciences, University of Kashan, Kashan 87317--53153, I. R. Iran}}
\begin{document}

\maketitle

\begin{abstract}
A simple connected graph $G$ is called a $p-$quasi $k-$cyclic graph, if there exists a subset $S$ of vertices such that $|S| = p$, $G \setminus S$ is $k-$cyclic and there is no a subset $S^\prime$ of $V(G)$ such that $|S^\prime| < |S|$ and $G \setminus S^\prime$ is $k-$cyclic. The aim of this paper is to characterize graphs with maximum values of Zagreb indices among all $p-$quasi $k-$cyclic graphs with $k \leq 3$.

\vskip 3mm

\noindent{\bf Keywords:}  $p-$Quasi $k-$cyclic graph, first Zagreb index, second Zagreb index.

\vskip 3mm

\noindent{\it 2010 AMS  Subject Classification Number:} 05C07.

\end{abstract}

\bigskip

\section{Introduction}

Throughout this paper all graphs are assumed to be simple and connected. We use the notations $P_n$, $C_n$, $S_n$ and $K_n$ to denote the $n-$vertex path, cycle, star and complete graph, respectively. The cyclomatic number of $G$ is defined as $C(G) = |E(G)| - |V(G)| + 1$ and if $C(G) = k$ then we say that $G$ is $k-$cyclic. In the special cases that $k = 0, 1, 2, 3$ the graph is called tree, unicyclic, bicyclic and tricyclic, respectively. The set of all $k-$cyclic graphs on a fixed vertex set of size $n$ is denoted by $C^k(n)$ and $N_G(v)$ is the set of all neighbours of $v$ in $G$.

A vertex of degree 1 is named a pendant vertex and an edge containing a pendant vertex is called a pendant edge of $G$. The maximum and minimum degrees of vertices are denoted by $\Delta = \Delta(G)$ and $\delta = \delta(G)$, respectively. A simple connected graph $G$ is called a $p-$quasi $k-$cyclic graph, if there exists a subset $S$ of vertices such that $|S| = p$, $G \setminus S$ is $k-$cyclic and there is no a subset $S^\prime$ of $V(G)$ such that $|S^\prime| < |S|$ and $G \setminus S^\prime$ is $k-$cyclic. The set of all such graphs is denoted by $Q_pC^k(n)$. Our other notations are standard and can be taken from the standard books on graph theory.

The Zagreb indices are the most  degree-based graph invariants were introduced by Gutman and Trinajsti\'c \cite{g0}. These graph invariants can be defined as:
\begin{eqnarray*}
M_{1}(G) &=& \sum_{v\in V(G)}(d(v))^{2},\\
M_{2}(G) &=& \sum_{uv\in E(G)}(d(u)d(v)).
\end{eqnarray*}

The following theorem is useful in our main results:

\begin{theorem}\label{RA1}
$($See \cite{g1,g2}$)$. Let $T$ be a tree of order $n$. If $T$ is different from
$S_n$, then $M_1(T ) < M_1(S_n)$ and $M_2(T) < M_2(S_n)$.
\end{theorem}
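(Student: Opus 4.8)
The plan is to prove both inequalities at once by means of a single grafting operation that strictly increases $M_1$ and $M_2$ simultaneously and drives any tree toward the star. The starting observation is that $T \neq S_n$ forces the diameter of $T$ to be at least $3$, since a tree of diameter at most $2$ is a star. Because there are only finitely many trees of order $n$, it then suffices to exhibit one transformation which, applied to any tree of order $n$ and diameter at least $3$, outputs another tree of order $n$ with strictly larger values of \emph{both} indices: iterating such a step cannot cycle (each index strictly increases), so it must terminate, and it can only terminate at a tree of diameter at most $2$, that is, at $S_n$. Chaining the strict inequalities along the resulting chain $T = T_0, T_1, \dots, T_m = S_n$ then gives $M_1(T) < M_1(S_n)$ and $M_2(T) < M_2(S_n)$.

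Concretely, I would fix a longest path $v_0 v_1 \cdots v_k$ of $T$ with $k \geq 3$. Then $v_0$ is a leaf, and by maximality of the path every neighbour of $v_1$ other than $v_2$ is a leaf; write $t = d(v_1) - 1 \geq 1$ for the number of these pendant neighbours. I form $T'$ from $T$ by deleting the $t$ edges joining $v_1$ to its pendant neighbours and instead joining each of these $t$ vertices to $v_2$. Since each relocated vertex is a leaf adjacent only to $v_1$, this creates no cycle and keeps the graph a connected tree of order $n$, with $d(v_1)$ dropping from $t+1$ to $1$, with $d(v_2)$ rising from $d(v_2)$ to $d(v_2)+t$, and with all other degrees unchanged.

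A direct degree count then gives $M_1(T') - M_1(T) = 2t\,(d(v_2)-1)$, which is positive because $v_2$ is internal and hence $d(v_2) \geq 2$. For the second index, only the edges incident with $v_1$ or $v_2$ change weight, and after cancellation one is left with $M_2(T') - M_2(T) = t \sum_{z \in N_T(v_2) \setminus \{v_1\}} d(z)$, a quantity that is positive because the index set is nonempty (it contains $v_3$). I expect the main point requiring care to be precisely this second bookkeeping computation for $M_2$: the weight of the edge $v_1 v_2$, the weights of the pendant edges at $v_1$, and the weights of all edges incident with $v_2$ change at once, so the cancellation leading to the clean expression $t \sum_{z} d(z)$ must be organised carefully, alongside verifying that the relocation never destroys the tree structure and that the process halts exactly at $S_n$. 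I would also note that the $M_1$ statement alone admits a shorter proof, since the degree sequence of any tree is majorised by $(n-1,1,\dots,1)$ and $x \mapsto x^2$ is convex, but the grafting argument above has the advantage of settling both indices together.
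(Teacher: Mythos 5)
Your proof is correct, but note that it cannot be matched against an argument in the paper itself: the paper states Theorem~\ref{RA1} with a pointer to the literature (\cite{g1,g2}) and supplies no proof of its own, so your grafting argument is a genuinely independent, self-contained route. I verified your two computations. Writing $t=d_T(v_1)-1$ and $d_2=d_T(v_2)$, relocating the $t$ pendant neighbours of $v_1$ to $v_2$ changes only the degrees of $v_1$ (from $t+1$ to $1$) and of $v_2$ (from $d_2$ to $d_2+t$), so $M_1(T')-M_1(T)=1-(t+1)^2+(d_2+t)^2-d_2^2=2t(d_2-1)>0$, since $d_2\geq 2$ ($v_2$ is adjacent to both $v_1$ and $v_3$). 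For $M_2$, the changed edge weights contribute $t\bigl[(d_2+t)-(t+1)\bigr]+\bigl[(d_2+t)-(t+1)d_2\bigr]+t\sum_{z}d_T(z)=t(d_2-1)+t(1-d_2)+t\sum_{z}d_T(z)=t\sum_{z\in N_T(v_2)\setminus\{v_1\}}d_T(z)>0$, exactly the cancellation you anticipated, and the sum is nonempty because it contains $v_3$. Your supporting claims are also sound: maximality of the path forces every neighbour of $v_1$ other than $v_2$ to be a leaf (otherwise the path could be extended), the operation preserves connectivity and the edge count $n-1$, and termination follows since each step strictly increases $M_1$ over the finite set of $n$-vertex trees while only trees of diameter at most $2$, i.e.\ stars, admit no further step; one could even shortcut this by observing that each step increases the number of leaves by exactly one, so at most $n$ steps occur. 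Compared with the paper's citation-only treatment, your approach buys a short, elementary, unified proof of both inequalities at once (plus, as you note, an even quicker majorisation proof for $M_1$ alone), at the cost of not recovering the more general quantitative bounds established in the cited sources, which this paper does not need in any case.
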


Let $U^3_n$ be the unicyclic graph obtained from the cycle $C_3$  by
attaching $n-3$ pendent edges to the same vertex on $C_3$.

\begin{theorem}\label{RA2}
$($ See \cite{z1,y1}$)$. $U^3_n$ is the unique graph with the largest Zagreb
indices $M_1$ and $M_2$ among all unicyclic graphs with $n$ vertices.
\end{theorem}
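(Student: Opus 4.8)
The plan is to pin down the target and then show that every unicyclic graph other than $U_n^3$ can be pushed strictly uphill in both indices by a single local move, so that $U_n^3$ is forced to be the unique maximizer. First I would record the degree sequence of $U_n^3$ — one vertex of degree $n-1$, two of degree $2$, and $n-3$ pendants — and compute the endpoint values $M_1(U_n^3)=n^2-n+6$ and $M_2(U_n^3)=n^2+3$; these are not needed for the argument but serve as a sanity check. Since $C^1(n)$ is finite, a maximizer exists, so it suffices to exhibit, for each $G\neq U_n^3$ in $C^1(n)$, a transformation $G\mapsto G'$ with $G'\in C^1(n)$, $M_1(G')>M_1(G)$ and $M_2(G')>M_2(G)$.

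The single tool I would use is a \emph{shift transformation}. Let $u$ be a vertex of maximum degree and let $uv\in E(G)$ with $d(v)\ge 2$. Writing $W=N_G(v)\setminus(N_G(u)\cup\{u\})$ for the neighbours of $v$ not already joined to $u$, set $G'=G-\{vw:w\in W\}+\{uw:w\in W\}$. Because $G$ is unicyclic, $u$ and $v$ have at most one common neighbour (a second would force a second independent cycle); removing and adding the same number $m=|W|$ of edges therefore keeps $G'$ connected and unicyclic, and since the branches below the $w$'s travel with them no new cycle appears. Transferring toward the heavier endpoint ($d(u)\ge d(v)$), I would verify the two identities
\begin{align*}
\Delta M_1 &= 2m\,(d(u)-d(v)+m), \\
\Delta M_2 &= (d(u)+m-d(v))\Big(\textstyle\sum_{w\in W}d(w)-m\Big) \\
&\quad + m\Big(\textstyle\sum_{x\in N(u)\setminus\{v\}}d(x)-\textstyle\sum_{y}d(y)\Big),
\end{align*}
where the second inner sum runs over the retained non-$u$ neighbours of $v$. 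The point that makes $\Delta M_2\ge 0$ transparent is that, by the choice of $W$, every retained neighbour of $v$ already lies in $N(u)$, so the last bracket is non-negative; each factor is then manifestly non-negative, and at least one term is strictly positive whenever $m\ge 1$ and $u$ carries any further branch.

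With this move in hand the proof is a descent/irreducibility argument. First apply the shift to the tree parts: any internal (non-pendant) vertex $v$ off the cycle has a child not adjacent to $u$, so the shift applies and strictly raises both indices; iterating collapses every pendant tree into pendant edges. Next apply it along the cycle: if the unique cycle has length $g\ge 4$, two consecutive cycle vertices supply a neighbour with no chord to $u$, and the shift is exactly the cycle-shortening move $g\mapsto g-1$, again strictly increasing $M_1,M_2$ (this is the clean case where all affected degrees are $2$ or $1$), driving $g$ down to $3$. Finally, a triangle with pendants spread over two of its vertices still admits the shift — move the pendants of the lighter triangle vertex onto the heavier one, where $\Delta M_1=2m^2>0$ even though the degrees were equal — so all pendants concentrate at a single triangle vertex, i.e. $G$ becomes $U_n^3$. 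As every step is strict, no graph other than $U_n^3$ can be extremal, which gives both existence and uniqueness simultaneously for $M_1$ and $M_2$.

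The main obstacle is bookkeeping rather than a single hard estimate: one must run the shift so that simplicity and the invariant $C(G')=1$ survive at every step, which is exactly why $W$ excludes the neighbours already adjacent to $u$ and why the transfer is taken toward a maximum-degree vertex. The delicate point inside the $M_2$ computation is the sign of the neighbour-sum bracket; isolating it relies on the observation that the retained neighbours of $v$ are forced to be common neighbours of $u$, and the boundary cases (the chord edge of the triangle, and equal-degree triangle vertices where strictness must still be exhibited) have to be checked by hand. Once these are settled the reduction to $U_n^3$ follows, and the same chain of strict inequalities proves both Zagreb statements at once.
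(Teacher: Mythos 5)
There is nothing in the paper to compare your argument against: Theorem \ref{RA2} is stated with ``(See \cite{z1,y1})'' and is never proved in this paper --- it is imported from the literature as an ingredient for the corollary in Section 2. So your proof is necessarily an independent route; it is in the same spirit as the transformation proofs in the cited sources and as Deng's unified approach \cite{d1}. Your core tool checks out: for $u$ of maximum degree, $uv\in E(G)$, $W=N_G(v)\setminus(N_G(u)\cup\{u\})$ and $m=|W|\ge 1$, the shifted graph is simple (no edge $uw$ existed), connected (the edge $uv$ survives) and unicyclic (edge count unchanged), and both of your increment identities are correct; writing $R=N_G(u)\cap N_G(v)$ for the retained neighbours, one indeed gets $\Delta M_2=(d(u)-d(v)+m)\bigl(\sum_{w\in W}d(w)-m\bigr)+m\bigl(\sum_{x\in N(u)\setminus\{v\}}d(x)-\sum_{y\in R}d(y)\bigr)$, where both brackets are nonnegative because $R\subseteq N(u)\setminus\{v\}$, while $\Delta M_1=2m(d(u)-d(v)+m)>0$ always.

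Two points need repair, both fixable with your own machinery. First, your descent narrative misapplies the move: phase one invokes the shift at ``any internal vertex $v$ off the cycle,'' but your transformation is only defined when $uv\in E(G)$ (if $u\notin N(v)$, moving all of $W$ can disconnect or even isolate $v$). The clean way to run the descent is: fix one maximum-degree vertex $u$; each admissible shift strictly enlarges $N(u)$ and keeps $u$ of maximum degree, so after finitely many shifts none applies; no shift applies only if every $v\in N(u)$ with $d(v)\ge 2$ satisfies $N(v)\subseteq N(u)\cup\{u\}$, which by connectedness forces every vertex into $N(u)\cup\{u\}$, i.e.\ $d(u)=n-1$; and a unicyclic graph with a universal vertex has exactly one edge not meeting $u$, i.e.\ it is $U^3_n$. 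This single loop replaces (and justifies) your three informal phases. Second, uniqueness for $M_2$ needs $\Delta M_2>0$ at \emph{every} shift, not just $\ge 0$; your sufficient condition (``$u$ carries a further branch'') should be turned into a case split: if $d(u)\ge 3$, then since adjacent vertices of a unicyclic graph have at most one common neighbour, $|R|\le 1<|N(u)\setminus\{v\}|$ and the second bracket is strictly positive; if $d(u)=2$, then $G$ is a cycle of length at least $4$, and the first bracket equals $d(w)-1=1>0$. With these two tightenings your proof is complete, and it yields both maximality and uniqueness for $M_1$ and $M_2$ simultaneously, exactly as you intended.
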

\begin{figure}
\begin{center}
\begin{tikzpicture}[line cap=round,line join=round,>=triangle 45,x=1.0cm,y=1.0cm]
\clip(-10,0) rectangle (-4,4);
\draw(-6.53,1.57) circle (0.57cm);
\draw (-6.48,2.14)-- (-6.54,1);
\draw (-6.48,2.14)-- (-7.01,2.64);
\draw (-6.48,2.14)-- (-6.74,2.67);
\draw (-6.48,2.14)-- (-5.96,2.64);
\draw (-6.85,3.26) node[anchor=north west] {$n-4$};
\draw (-7.,0.73) node[anchor=north west] {$B_n^{3,3}$};
\begin{scriptsize}
\fill [color=black] (-6.54,1) circle (1.5pt);
\fill [color=black] (-6.48,2.14) circle (1.5pt);
\fill [color=black] (-7.09,1.54) circle (1.5pt);
\fill [color=black] (-5.96,1.53) circle (1.5pt);
\fill [color=black] (-7.01,2.64) circle (1.5pt);
\fill [color=black] (-6.74,2.67) circle (1.5pt);
\fill [color=black] (-6.53,2.7) circle (1.5pt);
\fill [color=black] (-6.28,2.7) circle (1.5pt);
\fill [color=black] (-6.11,2.69) circle (1.5pt);
\fill [color=black] (-5.96,2.64) circle (1.5pt);
\end{scriptsize}
\end{tikzpicture}
\caption{ The bicyclic graph $B_n^{3,3}$ in Theorem \ref{RA3}.\label{sh1}}
\end{center}
\end{figure}
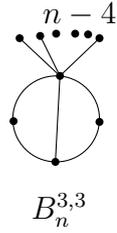

\vskip 3mm

\begin{theorem}\label{RA3}
$($ See \cite{d1}$)$. $B_n^{3,3}$  is the unique graph with  the largest Zagreb indices $M_1$ and $M_2$ among all bicyclic graphs with $n$ vertices, see Figure \ref{sh1}.
\end{theorem}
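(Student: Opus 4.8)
The plan is to show that the maximizer must possess a dominating vertex, i.e.\ $\Delta(G)=n-1$, and then to finish by a direct comparison of the very few bicyclic graphs that admit such a vertex. The guiding principle, exactly as in Theorems~\ref{RA1} and~\ref{RA2}, is that both Zagreb indices grow when degree is concentrated on a single vertex, so I expect the extremal graph to consist of a small bicyclic ``core'' carrying all remaining edges as pendant edges at one vertex. Since $\Delta(G)=n-1$ forces $G$ to be a star $S_n$ plus exactly two extra edges among the leaves (because $|E(G)|=n+1$), the endgame reduces to two explicit candidates.

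First I would set up two grafting transformations and check that each strictly increases $M_1$ and $M_2$ while preserving connectivity and the cyclomatic number $C(G)=2$. (a) \emph{Path collapsing:} if $G$ contains a pendant path of length at least $2$ hanging at a vertex $w$, replace it by the same number of pendant edges at $w$; this is the local analogue of Theorem~\ref{RA1} applied to the pendant subtree, and it settles both indices simultaneously. (b) \emph{Pendant shifting:} if $u$ has maximum degree and $v\neq u$ carries a pendant, move that pendant to $u$; since $d(u)\ge d(v)$, the single-move computation $\Delta M_1=2\bigl(d(u)-d(v)\bigr)+2\ge 2>0$ gives the increase. Applying (a) and (b) repeatedly drives $G$ to a graph whose $2$-core $H$ is a bicyclic graph of minimum degree at least $2$, with every pendant attached to one fixed vertex $u\in V(H)$.

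Next I would push $u$ to dominate the core: if some core vertex $x$ is not adjacent to $u$, slide a suitable core edge incident to $x$ onto $u$ (keeping $H$ bicyclic and simple) and again check $\Delta M_1,\Delta M_2>0$. Iterating forces $\deg_H(u)=|V(H)|-1$, and a short counting argument (the two non-star edges can cover at most four extra vertices, each needing core-degree $\ge 2$) shows that $H$ must be either the diamond $K_4-e$ on $4$ vertices or the bowtie of two triangles sharing a vertex on $5$ vertices. Hence $\Delta(G)=n-1$ and $G$ is one of exactly two graphs. A direct evaluation,
\begin{align*}
M_1(B_n^{3,3}) &= (n-1)^2+n+13, & M_2(B_n^{3,3}) &= (n-1)(n+3)+12,\\
M_1(\mathrm{bowtie}) &= (n-1)^2+n+11, & M_2(\mathrm{bowtie}) &= (n-1)(n+3)+8,
\end{align*}
shows $B_n^{3,3}$ is strictly larger in both indices, yielding both maximality and uniqueness.

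The main obstacle is the monotonicity of $M_2$ under the transformations. Unlike $M_1$, the change in $M_2$ depends on the degrees of the neighbours of $u$ and $v$, so each move must be analysed by cases (e.g.\ according to whether $u$ and $v$ are adjacent and which neighbours they share), and one must verify that no move ever creates a multi-edge or lowers the cyclomatic number. The core-sliding step needs the most care, since there one rearranges edges inside the cyclic part rather than moving harmless pendants; showing that a valid strictly increasing slide always exists whenever $u$ fails to dominate $H$ is where the bulk of the work will lie.
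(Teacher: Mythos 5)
First, a remark on the comparison you were asked for: the paper does not prove this statement at all --- Theorem~\ref{RA3} is imported from Deng \cite{d1} (``a unified approach to the extremal Zagreb indices\dots''), so your attempt can only be judged on its own merits and against the transformation technique standard in that literature. Your overall skeleton is the right one (reduce to a graph with a dominating vertex, observe that a bicyclic graph with $\Delta=n-1$ is a star plus two extra edges among the leaves, hence is either $B_n^{3,3}$ or the pendant-augmented bowtie, and compare), and your closing arithmetic is correct: $M_1(B_n^{3,3})=(n-1)^2+n+13$ versus $(n-1)^2+n+11$, and $M_2(B_n^{3,3})=(n-1)(n+3)+12$ versus $(n-1)(n+3)+8$.

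However, there is a genuine gap, and it sits exactly where you located ``the main obstacle'': your pendant-shifting move (b) is not merely hard to verify for $M_2$ --- it is false, even when $u$ is the \emph{unique} maximum-degree vertex, so no amount of case analysis will rescue it as stated. Concretely, let $G$ consist of two triangles $vz_1z_2$ and $vz_3z_4$ sharing the vertex $v$, with one pendant $w$ attached to $v$, four pendants attached to $z_1$ and three pendants attached to $z_3$ (so $n=13$, $|E(G)|=14$, $G$ is connected and bicyclic). Then $d(z_1)=6$ is the strict maximum, $d(v)=d(z_3)=5$. Moving the pendant $w$ from $v$ to $u=z_1$ gives $\Delta M_1=2\bigl(d(u)-d(v)\bigr)+2=4>0$, consistent with your formula, but a direct count gives $M_2=141$ before and $M_2=138$ after, i.e.\ $\Delta M_2=-3<0$: the loss on the edges $vz_3$, $vz_2$, $vz_4$ (whose $v$-endpoint drops in degree while $z_3$ has large degree) outweighs the gains at $u$. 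The same phenomenon breaks your core-sliding step, since there too you move single edges toward the large-degree vertex. The standard repair --- and essentially what Deng and the authors of the sources behind Theorems~\ref{RA1} and~\ref{RA2} do --- is a two-directional transformation: given two vertices $u,v$ each carrying pendant trees, move \emph{all} of $v$'s pendant structure to $u$ \emph{or} all of $u$'s to $v$, and prove that at least one of the two directions strictly increases both $M_1$ and $M_2$ (in my example, shifting pendants from $z_1$ onto $v$ is the direction that works). Without replacing (b) by such an argument, your reduction to a graph with all pendants at one dominating vertex does not go through for $M_2$, and hence neither maximality nor uniqueness is established for the second Zagreb index.
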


\begin{figure}
\begin{center}
\begin{tikzpicture}[line cap=round,line join=round,>=triangle 45,x=1.0cm,y=1.0cm]
\clip(-9,1) rectangle (8,6);
\draw (-4,4)-- (-5,3);
\draw (-4,4)-- (-3,3);
\draw (-4.02,3.37)-- (-3,3);
\draw (-4.02,3.37)-- (-5,3);
\draw (-5,3)-- (-3,3);
\draw (-4,2)-- (-5,3);
\draw (-4,2)-- (-3,3);
\draw (-5,3)-- (-5.73,3.6);
\draw (-5,3)-- (-5.85,3.33);
\draw (-5,3)-- (-5.89,2.59);
\draw (-7.03,3.33) node[anchor=north west] {$n-5$};
\draw (-5.94,1.56) node[anchor=north west] {$q_n(n-4,1,1,1,1)$};
\draw (0,4)-- (-1,2);
\draw (0,4)-- (1,2);
\draw (1,2)-- (-1,2);
\draw (0,2.8)-- (1,2);
\draw (0,2.8)-- (0,4);
\draw (0,2.8)-- (-1,2);
\draw (0,4)-- (-0.65,4.66);
\draw (0,4)-- (-0.39,4.69);
\draw (0,4)-- (0.48,4.68);
\draw (-0.64,5.36) node[anchor=north west] {$n-4$};
\draw (-1.28,1.49) node[anchor=north west] {$K_n(n-3,1,1,1)$};
\begin{scriptsize}
\fill [color=black] (-4,4) circle (1.5pt);
\fill [color=black] (-5,3) circle (1.5pt);
\fill [color=black] (-3,3) circle (1.5pt);
\fill [color=black] (-4.02,3.37) circle (1.5pt);
\fill [color=black] (-4,2) circle (1.5pt);
\fill [color=black] (-5.73,3.6) circle (1.5pt);
\fill [color=black] (-5.85,3.33) circle (1.5pt);
\fill [color=black] (-5.85,3.15) circle (1.5pt);
\fill [color=black] (-5.86,2.94) circle (1.5pt);
\fill [color=black] (-5.87,2.81) circle (1.5pt);
\fill [color=black] (-5.89,2.59) circle (1.5pt);
\fill [color=black] (0,4) circle (1.5pt);
\fill [color=black] (-1,2) circle (1.5pt);
\fill [color=black] (1,2) circle (1.5pt);
\fill [color=black] (0,2.8) circle (1.5pt);
\fill [color=black] (-0.65,4.66) circle (1.5pt);
\fill [color=black] (-0.39,4.69) circle (1.5pt);
\fill [color=black] (-0.13,4.71) circle (1.5pt);
\fill [color=black] (0.08,4.71) circle (1.5pt);
\fill [color=black] (0.26,4.7) circle (1.5pt);
\fill [color=black] (0.48,4.68) circle (1.5pt);
\end{scriptsize}
\end{tikzpicture}
\caption{ The tri-cyclic graphs  $q_n(n-4,1,1,1,1)$ and $K_n(n-3,1,1,1)$ in Theorem \ref{RA4}.\label{sh2}}
\end{center}
\end{figure}
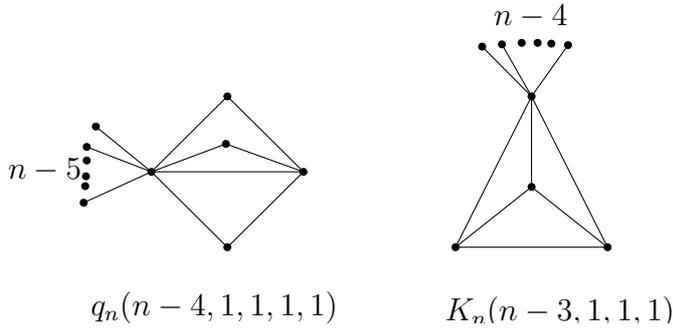

\begin{theorem}\label{RA4}
$($ See \cite{a1}$)$. Among all tri-cyclic graphs with $n(\geq 5)$ vertices,
\begin{enumerate}
\item $K_n(n-3,1,1,1)$ and $q_n(n-4,1,1,1,1)$ have the maximum values of first Zagreb index.

\item The graph $K_n(n-3,1,1,1)$ has maximum value of the second Zagreb index.
\end{enumerate}
\end{theorem}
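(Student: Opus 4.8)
The plan is to transform an arbitrary tricyclic graph into one of the two named graphs by a chain of degree-concentrating operations, each of which strictly increases $M_1$ and $M_2$ (with the sole exception of the tie between the two $M_1$-maximisers), and then to settle the resulting finite optimisation by direct computation.

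First I would isolate the basic pendant-shifting estimate. Let $u$ be a vertex of maximum degree in a tricyclic graph $G$ and let $w$ be a pendant vertex adjacent to some $v\neq u$; since $uw\notin E(G)$, deleting $vw$ and inserting $uw$ yields a tricyclic graph $G'$ on the same vertex set with
\[
M_1(G')-M_1(G)=2\bigl(d(u)-d(v)+1\bigr)\ge 2>0,
\]
while a companion computation, in which the maximality of $d(u)$ is used to dominate the neighbour sum $\sum_{x\in N(v)\setminus\{w\}}d(x)$ by $\sum_{x\in N(u)}d(x)$, gives $M_2(G')>M_2(G)$. Applying this repeatedly, together with Theorem \ref{RA1} (which forces every pendant tree to collapse to a star), I would show that in an extremal graph every tree branch has been reduced to a single pendant edge and all pendant edges have migrated to one maximum-degree vertex $u$ lying on the cyclic part. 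At this stage $G$ consists of a tricyclic $2$-core (a tricyclic graph with $\delta\ge 2$) together with a bundle of pendant edges at a single core vertex.

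The decisive step, and the one I expect to be the main obstacle, is to upgrade this to $\Delta(G)=n-1$, i.e.\ to force $u$ to be adjacent to every other vertex. Here I would use an edge-sliding (Kelmans-type) operation: for a vertex $w$ not adjacent to $u$, reroute the edges joining $w$ to $N(w)\setminus(N(u)\cup\{u\})$ so that they join $u$ instead. This preserves the vertex number, the edge number (hence the cyclomatic number, keeping the graph tricyclic) and connectivity, while it can be shown to increase both $M_1$ and $M_2$. The difficulty is organisational rather than conceptual: one must run this move over the finitely many tricyclic ``base'' types and check that every core other than $K_4$ and $K_{2,3}$-with-an-added-edge admits such an index-increasing transformation without ever disconnecting the graph. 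This case analysis over the tricyclic bases is the technical heart of the argument.

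Once $\Delta(G)=n-1$, the problem becomes finite. Writing $H$ for the graph formed by the three edges of $G$ not incident with $u$ (a simple graph on the $n-1$ leaves with $|E(H)|=3$), and using $d_G(v)=1+d_H(v)$ for $v\neq u$, I would record the identities
\[
M_1(G)=(n-1)^2+(n-1)+12+M_1(H),\qquad M_2(G)=(n-1)(n+5)+3+M_1(H)+M_2(H).
\]
Maximising $M_1$ thus reduces to maximising $M_1(H)$, and maximising $M_2$ to maximising $M_1(H)+M_2(H)$, over the five isomorphism types of three-edge simple graphs. A short tabulation gives $M_1(H)=12$ for both $H=C_3$ and $H=S_4$ (and strictly less for the other three types), while $M_1(H)+M_2(H)=24$ is attained uniquely by $H=C_3$ (against $21$ for $H=S_4$ and smaller values otherwise). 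Translating back, the triangle $H=C_3$ produces $K_n(n-3,1,1,1)$ and the star $H=S_4$ produces $q_n(n-4,1,1,1,1)$; this yields both conclusions, since the two graphs tie for the maximum of $M_1$ whereas $K_n(n-3,1,1,1)$ alone attains the maximum of $M_2$.
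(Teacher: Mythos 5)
First, a framing remark: the paper does not prove Theorem \ref{RA4} at all --- it imports the statement from \cite{a1} --- so there is no internal proof to compare against, and your argument must stand on its own. It does not, because its very first lemma is false. Maximality of $d(u)$ does not allow you to dominate $\sum_{x\in N(v)\setminus\{w\}}d(x)$ by $\sum_{x\in N(u)}d(x)$: degrees and neighbour-degree sums are not monotone in one another. Writing $S_x=\sum_{y\in N(x)}d(y)$, the move you describe (delete $vw$, insert $uw$) changes $M_2$ by exactly $S_u-S_v+d(u)-d(v)+1$ when $uv\in E(G)$, and this can be negative. Concrete tricyclic counterexample: take $K_4$ on $\{v,b,c,d\}$, join a new vertex $u$ to $v$, attach five pendant vertices to $u$ and one pendant vertex $w$ to $v$; here $n=11$, $m=13$, the graph is tricyclic, and $u$ is the unique maximum-degree vertex ($d(u)=6>5=d(v)$), yet deleting $vw$ and inserting $uw$ drops $M_2$ from $137$ to $133$, even while $M_1$ rises from $94$ to $98$. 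So pendant-shifting toward the maximum-degree vertex genuinely separates the two indices, which is precisely why the literature (including \cite{a1}) has to treat $M_1$ and $M_2$ with different, more careful transformations (e.g.\ choosing the target vertex by comparing neighbour-degree sums rather than degrees).

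The second gap is that your self-declared ``decisive step'' --- forcing $\Delta(G)=n-1$ by Kelmans-type edge sliding, verified over all tricyclic base types --- is announced but never carried out: neither the claim that the slide increases both indices nor the preservation of connectivity is checked, and this is exactly the part you call the technical heart. What does hold up is the endgame: granting $\Delta(G)=n-1$, your identities $M_1(G)=(n-1)^2+(n-1)+12+M_1(H)$ and $M_2(G)=(n-1)(n+5)+3+M_1(H)+M_2(H)$ are correct, and the tabulation over the five three-edge graphs ($C_3$ and $K_{1,3}$ tying at $M_1(H)=12$, and $C_3$ alone attaining $M_1(H)+M_2(H)=24$) correctly identifies $K_n(n-3,1,1,1)$ and $q_n(n-4,1,1,1,1)$. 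That finite verification is a nice reduction, but as submitted the proof is incomplete, and the transformation lemma it rests on is false for $M_2$.
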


Suppose $G$ and $H$ are two simple graphs. The union $G \cup H$ is a graph with vertex set $V(G) \cup V(H)$ and edge set $E(G) \cup E(H)$. The join of $G$ and $H$ is a graph with the same vertex set as $G \cup H$ and $E(G + H)$ = $E(G)$ $\cup$ $E(H)$ $\cup$ $\{ xy \mid x \in V(G), y \in V(H) \}$. Our other notations are standard and can be taken from \cite{im}.

\section{  Main Results }

The aim of this paper is to characterize graphs with maximum values of Zagreb indices among all $p-$quasi $k-$cyclic graphs with $k \leq 3$.

\begin{theorem}\label{A1}

Let $G$ be a  $p-$quasi $k-$cyclic graph. If $S\subset V(G)$, $|S|=p$ and $G-S \in C^k(n-p)$, then

\begin{enumerate}
\item $M_1(G)$ $\leq$ $M_1(G-S)$ $+$ $p(4k+n^2+2n+p(n-4)-p^2-3)$,

\item $M_2(G)$ $\leq$ $M_2(G-S)$ $+$ $pM_1(G-S)$ $+$ $(k+n-p-1)(p^2+2p(n-1))$ $+$ $\frac{p(p-1)(n-1)}{2}+p^2(n-p)(n-1)$,
\end{enumerate}
with equality in each if and only if $G\cong (G-S) + K_p$.
\end{theorem}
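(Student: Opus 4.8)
The plan is to fix $H := G-S$ and exploit the fact that, since $H \in C^k(n-p)$, it has exactly $|E(H)| = (n-p)-1+k$ edges and the degrees of $G$ split cleanly: for $v \in V(H)$ we have $d_G(v) = d_H(v) + a_v$, where $a_v$ counts the neighbours of $v$ lying in $S$ (so $0 \le a_v \le p$), while every $u \in S$ satisfies $d_G(u) \le n-1$. Crucially, deleting $S$ removes precisely the edges incident to $S$, so the subgraph induced on $V(H)$ is the fixed graph $H$ regardless of which $S$–$H$ and $S$–$S$ edges are present. Hence every graph $G$ with $G-S=H$ is obtained from $H$ by adjoining the $p$ vertices of $S$ together with an arbitrary collection of $S$-incident edges, and the join $(G-S)+K_p$ is exactly the member of this family in which every admissible such edge is present. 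I will therefore prove that both indices strictly increase under the insertion of any $S$-incident edge, forcing the join to be the unique maximiser, and then evaluate the indices on the join.

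For the monotonicity step, let $G'$ arise from $G$ by inserting one new edge $xy$. Then $d_{G'}(x)=d_G(x)+1$, $d_{G'}(y)=d_G(y)+1$, and all other degrees are unchanged, so
\[
M_1(G') - M_1(G) = 2\bigl(d_G(x)+d_G(y)\bigr) + 2 > 0 .
\]
For the second index, the new edge contributes $(d_G(x)+1)(d_G(y)+1)$ while every old edge at $x$ or $y$ gains the degree of its far endpoint, giving
\[
M_2(G') - M_2(G) = (d_G(x)+1)(d_G(y)+1) + \sum_{z\in N_G(x)} d_G(z) + \sum_{w\in N_G(y)} d_G(w) > 0 .
\]
Both increments are strictly positive. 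Applying this to $S$-incident edges (which never disturb $H=G-S$), I conclude that among all $G$ with $G-S=H$ each Zagreb index is maximised, and uniquely so, by the graph containing all such edges, namely $G \cong (G-S)+K_p$. This disposes of the equality characterisation and the bound at a single stroke.

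It remains to evaluate the indices on the join, where each $v\in V(H)$ has degree $d_H(v)+p$ and each $u\in S$ has degree $n-1$. For the first index I expand
\[
M_1\bigl((G-S)+K_p\bigr) = \sum_{v\in V(H)}\bigl(d_H(v)+p\bigr)^2 + p(n-1)^2
\]
and simplify via $\sum_{v\in V(H)} d_H(v) = 2|E(H)| = 2\bigl((n-p)-1+k\bigr)$, which reproduces part (1). For the second, I split the edges of the join into the three classes---inside $H$, between $S$ and $H$, and inside the $K_p$---and sum:
\[
\sum_{uv\in E(H)}\!\bigl(d_H(u)+p\bigr)\bigl(d_H(v)+p\bigr) + (n-1)\!\!\sum_{\substack{u\in S\\ v\in V(H)}}\!\!\bigl(d_H(v)+p\bigr) + \binom{p}{2}(n-1)^2 .
\]
The first sum expands through the identity $\sum_{uv\in E(H)}\bigl(d_H(u)+d_H(v)\bigr) = M_1(H)$, producing the $M_2(H)+p\,M_1(H)$ terms together with a multiple of $|E(H)|$; the middle sum gives the $S$–$H$ contribution; the last is the clique term. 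Substituting $|E(H)| = k+n-p-1$ and collecting everything completes the evaluation and hence part (2).

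I expect the only genuine labour to lie in this final bookkeeping for $M_2$: correctly partitioning the edge sum into its three classes and keeping the handshake-type identities straight while collecting the constants. The structural heart of the argument---that edge insertion strictly raises both indices, so the join is the unique extremal graph---is short and, pleasingly, settles the inequality and its equality case together.
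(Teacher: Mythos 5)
Your argument is correct in substance, and it takes a genuinely different route from the paper. The paper works entirely inside one chain of inequalities: it writes $d_G(u)=d_{G-S}(u)+l_u$ for $u\in V(G-S)$, splits $M_1$ and $M_2$ into the three natural pieces (inside $G-S$, inside $S$, and between), and bounds term by term via $l_u\le p$ and $d_G(u)\le n-1$ for $u\in S$, reading off the equality case from when every individual bound is tight. You instead prove that inserting any edge strictly increases both $M_1$ and $M_2$ (your two increment formulas are correct), observe that every $G$ with $G-S=H$ is a spanning subgraph of $H+K_p$ obtained by deleting only $S$-incident edges, and then evaluate both indices exactly at the join. Your route buys a cleaner and essentially automatic equality characterization --- the join is the unique maximal element under edge insertion --- whereas the paper's route produces the explicit right-hand side directly inside the estimate; the two are comparable in length, and your monotonicity lemma is reusable beyond this theorem.

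There is, however, one point you must not wave through. Your clique term $\binom{p}{2}(n-1)^2=\frac{p(p-1)(n-1)^2}{2}$ is the correct contribution of the edges inside $S$, but it does \emph{not} collect to the stated right-hand side of part (2), which contains $\frac{p(p-1)(n-1)}{2}$ with $(n-1)$ to the first power. This is a typo in the paper itself: the penultimate line of its proof has $\frac{p(p-1)(n-1)^2}{2}$, and the exponent silently disappears in the final simplification, so the theorem as printed is even internally inconsistent --- by its own equality clause the join attains the bound, yet direct evaluation at the join exceeds the printed bound by $\frac{p(p-1)(n-1)(n-2)}{2}>0$ whenever $p\ge 2$ and $n\ge 3$. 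Consequently your closing claim that ``collecting everything completes the evaluation and hence part (2)'' cannot be literally true: carried out honestly, your (correct) bookkeeping proves the corrected statement with $\frac{p(p-1)(n-1)^2}{2}$, and a finished write-up should perform that collection explicitly and flag the discrepancy rather than assert agreement with the printed formula.
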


\begin{proof}
To prove $(1)$, we assume that $u\in V(G-S)$ and define $l_u$ to be the number of vertices in $S$ adjacent to $u$. By definition of $M_1$, $$M_1(G) = \sum_{u\in V(G-S)}d_G^2(u)+\sum_{u\in S}d_G^2(u) = \sum_{u\in V(G-S)}(d_{G-S}(u)+l_u)^2+\sum_{u\in S}d_G^2(u).$$ By simplifying this equalities,

\begin{eqnarray*}
M_1(G)&=&\sum_{u\in V(G-S)}(d_{G-S}^2(u)+l^2_u+2d_{G-S}(u)l_u)+\sum_{u\in S}d_G^2(u)\\
&=&M_1(G-S)+\sum_{u\in V(G-S)}l^2_u+\sum_{u\in V(G-S)}2d_{G-S}(u)l_u+\sum_{u\in S}d_G^2(u)\\
&\leq &M_1(G-S)+\sum_{u\in V(G-S)}p^2+\sum_{u\in V(G-S)}2d_{G-S}(u)p+\sum_{u\in S}(n-1)^2\\
&=&M_1(G-S)+(n-p)p^2+4p(k+n-p-1)+p(n-1)^2\\
&=&M_1(G-S)+p(4k+n^2+2n+p(n-4)-p^2-3).
\end{eqnarray*}

The equality holds if and only if for each $u\in V(G-S)$, $l_u=p$ and for every vertex $u\in S$, we have $d_G(u)=n-1$. This condition is satisfied if and only if $G\cong (G-S)+ K_p$, proving the first part of theorem.

To prove $(2)$, we assume that $u^*v^*$ is an edge of $G$ such that $u^*\in V(G-S)$ and $v^*\in S$. By definition of $M_2$,

\begin{eqnarray*}
M_2(G)&=&\sum_{uv\in E(G-S)}d_G(u)d_G(v)+\sum_{uv\in E(G-(G-S))}d_G(u)d_G(v)\\
&+& \sum_{u^*v^*\in E(G)}d_G(u^*)d_G(v^*)\\
&=&\sum_{uv\in E(G-S)}(d_{G-S}(u)+l_u)(d_{G-S}(v)+l_v)+\sum_{uv\in E(G-(G-S))}d_G(u)d_G(v)\\
&+&\sum_{u^*v^*\in E(G)}(d_{G-S}(u^*)+l_{u^*})d_G(v^*).
\end{eqnarray*}

By simplifying the last equality,

\begin{eqnarray*}
M_2(G) &=& \sum_{uv\in E(G-S)}(d_{G-S}(u)d_{G-S}(v)+d_{G-S}(u)l_v+d_{G-S}(v)l_u+l_ul_v)\\
&+&\sum_{uv\in E(G-(G-S))}d_G(u)d_G(v)+\sum_{u^*v^*\in E(G)}(d_{G-S}(u^*)d_{G}(v^*)+d_{G}(v^*)l_{u^*}).
\end{eqnarray*}

Therefore,

\begin{eqnarray*}
M_2(G)&=&M_2(G-S)+\sum_{uv\in E(G-S)}(d_{G-S}(u)l_v+d_{G-S}(v)l_u+l_ul_v)\\
&+&\sum_{uv\in E(G-(G-S))}d_G(u)d_G(v)+\sum_{u^*v^*\in E(G)}(d_{G-S}(u^*)d_{G}(v^*)+d_{G}(v^*)l_{u^*})\\
&\leq&M_2(G-S)+p\sum_{uv\in E(G-S)}(d_{G-S}(u)+d_{G-S}(v)+p)\\
&+&\sum_{uv\in E(G-(G-S))}(n-1)^2+\sum_{u^*v^*\in E(G)}(d_{G-S}(u^*)(n-1)+(n-1)p)\\
&=&M_2(G-S)+pM_1(G-S)+(k+n-p-1)p^2+\frac{p(p-1)(n-1)^2}{2}\\
&+&2p(n-1)(k+n-p-1)+p^2(n-p)(n-1)\\
&=&M_2(G-S)+pM_1(G-S)+(k+n-p-1)(p^2+2p(n-1))\\
&+&\frac{p(p-1)(n-1)}{2}+p^2(n-p)(n-1).
\end{eqnarray*}

The equality is satisfied if and only if for each $u\in V(G-S)$, $l_u=p$ and for every vertex $u\in S$, $d_G(u)=n-1$. This condition is also equivalent to the fact that $G\cong (G-S)+ K_p$. This completes the proof.
\end{proof}

\begin{theorem}\label{A2}
Suppose $A=\{H_1,H_2,...,H_r\}\subset C^k(n-p)$, $H\in C^k(n-p)\setminus A$, $B=\{H_i+ K_p|i=1,2,...,r\}$ and
$G\in Q_pC^k(n)\setminus B$. If $M_1(H)<M_1(H_1)=...=M_1(H_r)$ and $M_2(H)<M_2(H_1)=...=M_2(H_r)$, then
\begin{enumerate}
\item $M_1(G)$ $<$ $M_1(H_1+ K_p)$ $=$ $\cdots$ = $M_1(H_r+ K_p)$,

\item $M_2(G)$ $<$ $M_2(H_1+ K_p)$ = $\cdots$ = $M_2(H_r+ K_p)$.
\end{enumerate}
\end{theorem}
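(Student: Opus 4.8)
The plan is to reduce both claims to Theorem \ref{A1}. Since $G \in Q_pC^k(n)$, by definition there is a set $S$ with $|S| = p$ and $G - S \in C^k(n-p)$, so both estimates of Theorem \ref{A1} apply with this particular $S$. Abbreviate the additive constants by $C_1 := p(4k + n^2 + 2n + p(n-4) - p^2 - 3)$ and $C_2 := (k+n-p-1)(p^2+2p(n-1)) + \frac{p(p-1)(n-1)}{2} + p^2(n-p)(n-1)$, so that Theorem \ref{A1} reads $M_1(G) \le M_1(G-S) + C_1$ and $M_2(G) \le M_2(G-S) + pM_1(G-S) + C_2$, with equality in each case precisely when $G \cong (G-S)+K_p$. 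The structural feature I would exploit is that the $M_2$-bound is strictly increasing in each of $M_1(G-S)$ and $M_2(G-S)$, since the coefficients $p$ and $1$ are positive.

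Next I would pin down the reference values on the right-hand sides. The chain of equalities underlying Theorem \ref{A1}, taken with equality throughout, applies verbatim to the graph $H_i + K_p$ (here the deleted set is the copy of $K_p$ and $(H_i+K_p)-K_p = H_i \in C^k(n-p)$), giving the identities $M_1(H_i + K_p) = M_1(H_i) + C_1$ and $M_2(H_i + K_p) = M_2(H_i) + pM_1(H_i) + C_2$. Because the hypothesis gives $M_1(H_1) = \cdots = M_1(H_r)$ and $M_2(H_1) = \cdots = M_2(H_r)$, these identities immediately yield $M_1(H_1 + K_p) = \cdots = M_1(H_r + K_p)$ and $M_2(H_1 + K_p) = \cdots = M_2(H_r + K_p)$, which settles the equality chains in both statements.

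For the strict inequalities I would split on where $G - S$ lies. If $G - S \notin A$, then $G-S$ plays the role of $H$ in the hypothesis, so $M_1(G-S) < M_1(H_1)$ and $M_2(G-S) < M_2(H_1)$; substituting into the monotone bounds gives $M_1(G) \le M_1(G-S) + C_1 < M_1(H_1) + C_1 = M_1(H_1+K_p)$, and likewise $M_2(G) < M_2(H_1+K_p)$. If instead $G - S = H_i \in A$, then $M_1(G-S) = M_1(H_1)$ and $M_2(G-S) = M_2(H_1)$, so the bounds only give ``$\le$''; here I would invoke $G \notin B$. Since $G \not\cong H_i + K_p = (G-S)+K_p$, the equality characterization in Theorem \ref{A1} forces both estimates to be strict, again producing $M_1(G) < M_1(H_1+K_p)$ and $M_2(G) < M_2(H_1+K_p)$.

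The step needing the most care is this second case: the estimates of Theorem \ref{A1} are tight exactly on join graphs, so when $G-S$ already realizes the maximal Zagreb values the strictness cannot be supplied by the hypothesis on $H$ and must instead be extracted from $G \notin B$ through the equality condition $G \cong (G-S)+K_p$. A minor secondary point is the monotonicity of the $M_2$-bound, which is immediate from the positive signs of the coefficients, together with the harmless observation that the reference values $M_1(H_i+K_p)$ and $M_2(H_i+K_p)$ come from the equality case of the computation in Theorem \ref{A1} and therefore do not require $H_i + K_p$ itself to belong to $Q_pC^k(n)$.
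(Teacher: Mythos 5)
Your proof is correct and follows essentially the same route as the paper's: obtain the reference values $M_1(H_i+K_p)$ and $M_2(H_i+K_p)$ from the equality case of Theorem \ref{A1}, then split on whether $G-S$ lies in $A$, using the hypothesis on $H$ when $G-S\notin A$ and using $G\notin B$ together with the equality characterization $G\cong (G-S)+K_p$ when $G-S\in A$. Your additional remarks (the monotonicity of the $M_2$-bound in both $M_1(G-S)$ and $M_2(G-S)$, and the fact that the identities for $M_1(H_i+K_p)$ and $M_2(H_i+K_p)$ are direct computations that do not require $H_i+K_p\in Q_pC^k(n)$) simply make explicit details that the paper's terser argument leaves implicit.
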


\begin{proof}
By Theorem \ref{A1}(1), for each $i$, $1 \leq i \leq r$, $M_1(H_i+ K_p)$ = $M_1(H_i)$ + $p(4k+n^2+2n+p(n-4)-p^2-3).$ Since
$G\not\in B$, for every subset $S$ of $V(G)$ with this property that $G-S\in C^k(n-p)$, we have $G-S\not\in A$ or $G-S\in A$ and
$G\neq (G-S)+ K_p$. Thus, by Theorem \ref{A1}(1), $M_1(G)<M_1(H_1+ K_p)=...=M_1(H_r+ K_p)$. To prove the second part, we apply Theorem \ref{A1}(2) and a similar argument as above.
\end{proof}

From Theorems \ref{RA1}, \ref{RA2}, \ref{RA3}, \ref{RA4} and \ref{A2}, we have the following corollary.

\begin{corollary}
Suppose $n$ is a given positive integer and $G \in Q_pC^k(n)$. Then,
\begin{enumerate}
\item  If $k = 0$ and $n \geq p + 2 $ then $M_1(G) \leq M_1(S_{n-p}+ K_p)$ and $M_2(G) \leq M_2(S_{n-p}+ K_p)$. Hence $S_{n-p}+ K_p$ has the maximum first and second Zagreb indices in the class $Q_pC^0(n)$ with $n \geq p + 2 $.

\item  If $k = 1$ and $n \geq p + 3 $ then $M_1(G) \leq M_1(U_{n-p}^3+ K_p)$ and $M_2(G) \leq M_2(U_{n-p}^3+ K_p)$. Hence $U_{n-p}^3+ K_p$ has the maximum first and second Zagreb indices in the class $Q_pC^1(n)$ with $n \geq p + 3 $.

\item  If $k = 2$ and $n \geq p + 4 $ then $M_1(G) \leq M_1(B_{n-p}^{3,3}+ K_p)$ and $M_2(G) \leq M_2(B_{n-p}^{3,3}+ K_p)$. Hence $B_{n-p}^{3,3}+ K_p$ has the maximum first and second Zagreb indices in the class $Q_pC^2(n)$ with $n \geq p + 4 $.

\item  If $k = 3$ and $n \geq p + 5 $ then $M_1(G) \leq M_1(K_{n-p}(n-p-3,1,1,1)+ K_p)$ = $M_1(q_{n-p}(n-p-4,1,1,1,1)+ K_p)$. Hence $K_{n-p}(n-p-3,1,1,1)+ K_p$ and $q_{n-p}(n-p-4,1,1,1,1)+ K_p$ have the maximum first Zagreb index in the class $Q_pC^3(n)$ with $n \geq p + 5 $.

\item  If $k = 3$ and $n \geq p + 5 $ then $M_2(G) \leq M_2(K_{n-p}(n-p-3,1,1,1)+ K_p)$. Hence $K_{n-p}(n-p-3,1,1,1)+ K_p$ has the maximum second Zagreb index in the class $Q_pC^3(n)$ with $n \geq p + 5 $.
\end{enumerate}
\end{corollary}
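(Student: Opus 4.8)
The plan is to reduce the Corollary to repeated applications of Theorem \ref{A2}, feeding into it the extremal members of $C^k(n-p)$ furnished by Theorems \ref{RA1}--\ref{RA4}. Fix $k\le 3$ and let $G\in Q_pC^k(n)$. By the very definition of a $p$-quasi $k$-cyclic graph there is a set $S\subset V(G)$ with $|S|=p$ and $G-S\in C^k(n-p)$, so the hypotheses of Theorem \ref{A1} are met. The crucial observation is that the additive correction $p(4k+n^2+2n+p(n-4)-p^2-3)$ appearing in Theorem \ref{A1}(1), and the corresponding correction for $M_2$, depend only on $n,p,k$ and not on the internal structure of $G-S$. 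Hence maximizing $M_i(G)$ over $Q_pC^k(n)$ is equivalent to maximizing $M_i(G-S)$ over $C^k(n-p)$ and then taking the join with $K_p$; moreover the equality clause of Theorem \ref{A1} guarantees the maximum is attained only by a join $H+K_p$. Theorem \ref{A2} is precisely the packaged form of this transfer principle.

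For $k\in\{0,1,2\}$ the same graph maximizes both Zagreb indices, so a single instantiation of Theorem \ref{A2} suffices in each case. When $k=0$, Theorem \ref{RA1} identifies $S_{n-p}$ as the unique maximizer among trees (requiring $n-p\ge 2$, i.e. $n\ge p+2$), and I would take $A=\{S_{n-p}\}$. When $k=1$, Theorem \ref{RA2} gives $U^3_{n-p}$ ($n\ge p+3$) and I take $A=\{U^3_{n-p}\}$. When $k=2$, Theorem \ref{RA3} gives $B^{3,3}_{n-p}$ ($n\ge p+4$) and I take $A=\{B^{3,3}_{n-p}\}$. In all three cases $r=1$ and both hypotheses of Theorem \ref{A2} hold simultaneously, so Theorem \ref{A2} yields $M_1(G)\le M_1(H_1+K_p)$ and $M_2(G)\le M_2(H_1+K_p)$, which is exactly parts (1)--(3).

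The case $k=3$ is the one that needs care, and it is the main obstacle. By Theorem \ref{RA4} the set of $M_1$-maximizers among tricyclic graphs on $n-p$ vertices is $\{K_{n-p}(n-p-3,1,1,1),\,q_{n-p}(n-p-4,1,1,1,1)\}$, whereas the $M_2$-maximizer is the single graph $K_{n-p}(n-p-3,1,1,1)$. Because these two extremal sets differ, the hypotheses of Theorem \ref{A2} as literally stated (which demand one common set $A$ working for both indices) cannot be met by a single choice of $A$. The resolution is to notice that the proofs of the two parts of Theorem \ref{A2} are logically independent: part (1) uses only the $M_1$-hypothesis and part (2) only the $M_2$-hypothesis. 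I would therefore apply the $M_1$-half of Theorem \ref{A2} with $A=\{K_{n-p}(n-p-3,1,1,1),\,q_{n-p}(n-p-4,1,1,1,1)\}$ and $r=2$ to obtain part (4), and separately apply the $M_2$-half with $A=\{K_{n-p}(n-p-3,1,1,1)\}$ and $r=1$ to obtain part (5). Both applications require $n-p\ge 5$, i.e. $n\ge p+5$, matching the stated hypothesis.

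Beyond the $k=3$ bookkeeping, the remaining points to verify are routine: that every $G\in Q_pC^k(n)$ admits a valid set $S$ with $G-S\in C^k(n-p)$ (immediate from the definition), that each candidate join $H_i+K_p$ genuinely lies in $Q_pC^k(n)$ so that it is a legitimate competitor, that the vertex-count thresholds stated in the corollary are exactly those under which Theorems \ref{RA1}--\ref{RA4} are available, and that the equality characterization $G\cong(G-S)+K_p$ of Theorem \ref{A1} propagates through Theorem \ref{A2} to pin down the extremal graphs exactly. None of these requires fresh computation, the analytic work having already been absorbed into Theorems \ref{A1} and \ref{A2}.
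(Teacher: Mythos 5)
Your proposal is correct and follows essentially the same route as the paper, whose entire proof of the corollary is the single remark that it follows from Theorems \ref{RA1}, \ref{RA2}, \ref{RA3}, \ref{RA4} and \ref{A2}. In fact your handling of $k=3$ is more careful than the paper's: since the $M_1$-extremal set $\{K_{n-p}(n-p-3,1,1,1),\,q_{n-p}(n-p-4,1,1,1,1)\}$ and the $M_2$-extremal set $\{K_{n-p}(n-p-3,1,1,1)\}$ differ, the hypotheses of Theorem \ref{A2} as literally stated cannot be satisfied by a single choice of $A$, and your observation that the two parts of Theorem \ref{A2} are logically independent (so that the $M_1$-half and the $M_2$-half can each be invoked with its own set $A$) is precisely the repair that the paper silently assumes.
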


\vskip 3mm

\noindent\textbf{Acknowledgement.} This research is partially supported by the University of Kashan.

\end{document}